\documentclass[10pt,reqno]{amsart}
\usepackage[english]{babel}
\usepackage{amsmath}
\usepackage{amssymb}
\usepackage{amsfonts}
\usepackage{graphicx}
\usepackage{xcolor}
\usepackage[colorlinks]{hyperref}

%%%%%---------- ORCiD
\newcommand\orcidicon[1]{\href{https://orcid.org/#1}{\includegraphics[scale=0.02]{orcid.pdf}}}

 %Here the author defines the UDC of his work
 %Here the author defines the AMS classifiers of his works
\setcounter{page}{1}

\newtheorem{lemma}{Lemma}
\newtheorem{theorem}{Theorem}

\newtheorem{corollary}{Corollary}

\definecolor{myblue}{rgb}{0.03, 0.27, 0.79}

\hypersetup{linkcolor=blue,filecolor=black,urlcolor=blue, citecolor=blue}

\begin{document}
	\renewcommand{\refname}{References}
	
	\thispagestyle{empty}
	
	\title[\sc Central orders in simple superalgebras]{\large Central orders in simple right-alternative superalgebras and right-symmetric algebras.}
	
	\author[A.S. PANASENKO]{{\bf \large \textsc{A.S. Panasenko}}\\ \\
	}
	\address{Alexander Sergeevich Panasenko % Contact information of all authors and place of affiliation are indicated in English only
		\newline\hphantom{iii} Sobolev Institute of Mathematics,
		\newline\hphantom{iii} pr. Koptyuga, 4,
		\newline\hphantom{iii} 630090, Novosibirsk, Russia}%
	\email{\textcolor{blue}{a.panasenko@g.nsu.ru}}%
	
	\thanks{\sc Panasenko, A.S.,
		Central orders in simple right-alternative superalgebras and right-symmetric algebras}
	\thanks{\copyright \ 2024 Panasenko A.S.}
	\thanks{\rm The work was carried out within the framework of the state assignment of the SB RAS Institute of Mathematics, topic FWNF-2022-0002.}
	
	\maketitle {\small
		\begin{quote}
			\noindent{\bf Abstract:} We consider some recently constructed examples of simple finite-dimensional right-alternative superalgebras and right-symmetric algebras. We prove that the central order in any of these algebras and superalgebras is embedded in a finite module over its center (or over the even part of its center in the case of superalgebras).\medskip
			
			\noindent{\bf Keywords:} central order, simple superalgebra, simple algebra, right-alternative superalgebra, right-symmetric algebra.
		\end{quote}
	}
	
	\bigskip
	
	\section{Introduction}
	The associator is the most important multilinear function in the theory of non-associative algebras. Its first significant application is in the theory of alternative algebras. By definition, an alternative algebra is an algebra in which the associator is a skew-symmetric function of its arguments. The opposite notion is an asymmetric algebra, in which the associator is a symmetric function of its arguments. As E.~Kleinfeld showed in \cite{Kleinfeld}, these algebras are too close to associative: in particular, any prime asymmetric algebra over a field of characteristic neither 2 nor 3 is associative.
	
	For decades, attempts have been made to study the theory of algebras in which the associator has weaker (skew)symmetry properties. Algebras in which the associator is a symmetric function of the second and third arguments are called right-symmetric. These objects (and the left-symmetric algebras anti-isomorphic to them) have arisen in various areas of abstract algebra and geometry: for the classification of convex homogeneous cones \cite{Vinberg}, in the study of the Yang-Baxter equation \cite{Yang}, and many other applications are given in the survey \cite{Burde}. Unfortunately, the variety of these algebras is so large that it leaves no chance for an adequate structure theory. Many classical results for other varieties turn out to be incorrect for right-symmetric algebras even in the finite-dimensional case. A special case of left-symmetric algebras --- Novikov algebras --- has an acceptable (though far from ideal) finite-dimensional structure theory with hopes of transition to infinite-dimensional algebras (see the papers \cite{Xu}, \cite{Zhelyabin1}, \cite{Zhelyabin2}, \cite{P2022}, \cite{P2024}).
	
	Algebras in which the associator is a skew-symmetric function of the second and third arguments are called right-alternative. The finite-dimensional theory of right-alternative algebras is very close to the theory of alternative algebras. As A. Albert showed in \cite{Albert}, every simple finite-dimensional right-alternative algebra is alternative. In the infinite-dimensional case, it is also evident that the presence of non-alternative right-alternative algebras can be regarded as an error. For example, in \cite{Skor} it is shown that every non-degenerate (in particular, unital simple) right alternative algebra is alternative. At the same time (\cite{Miheev}), there exist simple right alternative nil-algebras of index 3.
	
	However, the transition to superalgebras greatly expands the possibilities. Initially, the study of right alternative superalgebras was carried out within the framework of special cases -- alternative and (-1,1) superalgebras. In the last decade, the theory of simple finite-dimensional right alternative superalgebras has been actively developed mainly in the works of S.~V.~Pchelintsev and O.~V.~Shashkov (a brief presentation is available in the review \cite{PS0}).
	
	In the work of E.~Formanek \cite{Formanek} it was proved that the central order in a finite-dimensional central simple associative algebra (i.e. in a matrix algebra) is embedded in a finitely generated module over its center. In \cite{P2015} and \cite{P2019} this result was extended to alternative and Jordan algebras. In \cite{P2020} simple alternative and Jordan superalgebras were considered, and for most of them it was proved that the central order in these superalgebras can be embedded in a finitely generated module over the even part of its center.
	
	This paper is a natural continuation of the above-mentioned articles. In this paper we consider central orders in some important simple right-symmetric algebras and simple right-alternative superalgebras constructed in recent years. For these examples it is proved that they can be embedded in a finite module over their center (the even part of their center in the case of superalgebras).
	
	\section{Examples of simple right-symmetric algebras}
	
	We will use the standard notations for the associator and commutator in an arbitrary algebra: $(x,y,z) = (xy)z - x(yz)$, $[x,y]=xy-yx$.
	
	\textbf{Definition.} Let $A$ be an algebra and for any $x,y,z\in A$ we have $(x,y,z)=(x,z,y)$. Then the algebra $A$ is called a \textit{\textbf{right-symmetric algebra}}.
	
	\smallskip One way to construct new right-symmetric algebras is to construct an endomorph. This method, in particular, allowed us to construct the following example of the so-called \textbf{\textit{matrix RS-algebra}}, which first appeared in \cite{PozhShest2}.
	
	\medskip\textbf{Example 1.} \textit{Let $F^n$ be the direct sum of $n$ copies of the field $F$. In the algebra $\overline{F^n}$ there is a basis $e_1,\dots,e_n$, where $e_i=(0,\dots,0,1,0,\dots,0)$ and the unit is at the $i$-th place. Consider the linear mapping $\overline{\phantom{a}}: F^n \to M_n(F)$, $\overline{e_i} = e_{ii}$. Let us define an algebra structure on the direct sum of subalgebras $F^n + M_n(F)$ according to the rule
		\[v \cdot A = vA, \quad A\cdot v = vA + [A,\overline{v}].\]
		If $n>1$, then $F^n + M_n(F)$ is a simple non-associative right-symmetric algebra.
	}
	
	Let $V_2$ be the standard module of rows over $M_2(F)$. Let us fix a mapping $\pi: V_2 \to M_2(F)$ by the rule
	\[\pi(x,y) = \begin{pmatrix}
		x & y \\
		x & y
	\end{pmatrix}.\]
	For a fixed matrix $C\in M_2(F)$ we define $\psi_{C}(x) = \pi(x) \odot C$, where $\odot$ is the Hadamard product.
	
	\medskip\textbf{Example 2.} \textit{Let $V_2$ be the space of rows of length 2 over the field $F$ and let $V_2$ be given some bilinear operation $x\bullet y$. On the direct sum of spaces $\mathcal{A} = V_2 + M_2(F)$ we define multiplication, assuming that $M_2(F)$~--- is a matrix subalgebra, $vA$ is defined as the multiplication of a row $v$ by a matrix $A$, and there exists $C\in M_2(F)$ such that
		\[Av = vA + [A,[R^{\bullet}_{v}]], \qquad wu = w\bullet u + \psi_C(u)\pi(w)\]
		for any $v,w,u\in V_2$, $A\in M_2(F)$, where $[\varphi]$ is the matrix of a linear transformation of the space $V_2$.
	}
	
	In the paper \cite{PozhShest1} A.~P.~Pozhidaev and I.~P.~Shestakov proved that a right-symmetric algebra of the form $W + M_2(F)$, where $W$ is an irreducible module over $sl_2$, has the form $\mathcal{A}$ from Example~2.
	
	\section{Central orders in simple right-symmetric algebras}
	
	\textbf{Definition.} Let $A$ be an algebra. Then the \textit{\textbf{center}} of $A$ is the following set:
	\[Z(A) = \{z\in A\mid (z,x,y)=(x,z,y)=(x,y,z)=[x,z]=0 \quad \forall x,y\in A\}.\]
	
	Let $A$ be an algebra, $Z=Z(A)$ be the center of $A$, and $Z$ contain no zero divisors of the entire algebra $A$. Consider the set of pairs $S=\{(a,z)\mid a\in A, z\in Z, z\neq 0\}$. On the set $S$ we consider the equivalence relation:
	\[(a,x)\sim (b,y) \longleftrightarrow ay=bx.\]
	We denote the set of equivalence classes by $Z^{-1}A$, and the equivalence class containing the element $(a,z)$ will be denoted by $\frac{a}{z}$. On $Z^{-1}A$ we define addition and multiplication as on ordinary fractions. Then $Z^{-1}A$ is an algebra containing $A$ as a subalgebra. In addition, the algebra $Z^{-1}A$ can be considered over the field of fractions $Z^{-1}Z$.
	
	\medskip\textbf{Definition.} Let $A$ be an algebra, $Z=Z(A)$ be the center of $A$, and $Z$ does not contain zero divisors of the entire algebra $A$. Then the algebra $A$ is called a \textbf{\textit{central order}} in the algebra $Z^{-1}A$.
	
	\medskip Note that $A\cap Z^{-1}Z = Z$. Indeed, the inclusion $Z\subseteq A\cap Z^{-1}Z$ is obvious. Let $b\in A\cap Z^{-1}Z$. Since $Z^{-1}Z = Z(Z^{-1}A)$ (proved in \cite{Zhevl}, Proposition~8.2), it follows that $[b,x]=(b,x,y)=(x,b,y)=(x,y,b)=0$ for any $x,y\in A$. But $b\in A$, so that $b\in Z(A)$.
	
	\medskip There is an explicit multiplication on the matrix RS-algebra, so we begin our study of central orders in right-symmetric algebras with this case.
	
	\begin{lemma}
		Let $B$ be an algebra that is a central order in the matrix RS-algebra $A = F^n + M_n(F)$. Then $B$ can be embedded into a finitely generated $Z(B)$-module.
	\end{lemma}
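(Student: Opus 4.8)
The plan is to move the problem into the associative world, where it is already settled by Formanek's theorem \cite{Formanek}, by replacing $A$ and $B$ by their multiplication algebras.

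I would start from the structure of the matrix RS-algebra. From the explicit multiplication one checks at once that $A=F^{n}+M_{n}(F)$ is unital, with unit the identity matrix $1\in M_{n}(F)$. Since $B$ is a central order, $Z:=Z(B)$ is a commutative domain containing no zero divisors of $A$, and by \cite[Prop.~8.2]{Zhevl} the localization satisfies $Z^{-1}Z=Z(Z^{-1}B)=Z(A)$; put $K:=Z^{-1}Z$. Then $K$ is a field (the field of fractions of $Z$), $B\subseteq A$ with $Z^{-1}B=A$, and $A$ is a finite-dimensional $K$-algebra which is simple (by Example~1) and unital with centroid $Z(A)=K$. Set $N:=\dim_{K}A$, and recall from the discussion preceding the Lemma that $B\cap Z(A)=Z$.

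Next I would form the multiplication algebra $\mathcal{M}(A)\subseteq\operatorname{End}_{K}(A)$, the associative subalgebra generated by all $L_{a},R_{a}$ ($a\in A$), and $\mathcal{M}(B)$, the subring generated by $L_{b},R_{b}$ ($b\in B$). Simplicity of $A$ makes $A$ a faithful irreducible $\mathcal{M}(A)$-module, whose commutant (because $A$ is unital) is the centroid $K$; by the density theorem $\mathcal{M}(A)=\operatorname{End}_{K}(A)\cong M_{N}(K)$, a finite-dimensional central simple associative $K$-algebra. Since every $a\in A$ has the form $a=z^{-1}b$ with $b\in B$, $z\in Z\setminus\{0\}$, one has $L_{a}=z^{-1}L_{b}$ and $R_{a}=z^{-1}R_{b}$, so $\mathcal{M}(A)=Z^{-1}\mathcal{M}(B)$ and $\mathcal{M}(B)$ is a central order in $M_{N}(K)$. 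Formanek's theorem then yields a finitely generated $Z(\mathcal{M}(B))$-module inside $M_{N}(K)$ containing $\mathcal{M}(B)$; and because $A$ is unital, $b\mapsto L_{b}$ is an injective $Z$-linear map $B\hookrightarrow\mathcal{M}(B)$ (injective since $L_{b}(1)=b$), so $B$ also lies in that module.

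The one genuine difficulty is that Formanek's theorem gives finite generation over $Z(\mathcal{M}(B))$, which need not be the image of $Z$, so I must descend back to $Z$. This is handled by a pinching estimate: fix any $z_{0}\in Z\setminus\{0\}$. Every element of $\mathcal{M}(B)$ is a $\mathbb{Z}$-combination of products of multiplications by elements of $B$, hence maps $B$ into $B$; moreover any central element $\psi\in Z(\mathcal{M}(B))$ commutes with $Z^{-1}\mathcal{M}(B)=\mathcal{M}(A)=\operatorname{End}_{K}(A)$, so $\psi=\alpha\cdot\mathrm{id}_{A}$ for some $\alpha\in K$, and then $\alpha z_{0}=\psi(z_{0})\in B\cap Z(A)=Z$, i.e. $\alpha\in z_{0}^{-1}Z$. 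Thus $Z\cdot\mathrm{id}_{A}\subseteq Z(\mathcal{M}(B))\subseteq z_{0}^{-1}Z\cdot\mathrm{id}_{A}$, and since $z_{0}^{-1}Z$ is a free $Z$-module of rank one, any finitely generated $Z(\mathcal{M}(B))$-submodule of $M_{N}(K)$ is contained in a finitely generated $Z$-submodule; rescaling the generators by $z_{0}$ makes this explicit. Combining this with $B\hookrightarrow\mathcal{M}(B)$ gives the conclusion. I expect the points needing care to be the identification $\mathcal{M}(A)=\operatorname{End}_{K}(A)$ (irreducibility together with the density theorem, using unitality to pin the commutant down to the centroid) and the pinching bound for $Z(\mathcal{M}(B))$; the verification that $A$ is unital and the remaining bookkeeping are routine.
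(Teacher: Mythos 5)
Your proof is correct, but it follows a genuinely different route from the paper. The paper argues entirely inside the matrix RS-algebra: it fixes $z\in Z$ with $e_i=\frac{f_i}{z}$, $e_{ij}=\frac{f_{ij}}{z}$ and exhibits explicit products of the $f_{ij}$'s with a general element $a\in B$ that recover each coordinate of $a$ multiplied by a concrete power of $z$, concluding $Bz^7\subseteq \sum Ze_i+\sum Ze_{ij}$; this is self-contained, gives an explicit bound, and is stylistically uniform with the other lemmas (including the superalgebra cases, where only the even part of the center is available). You instead transfer the problem to the associative setting: unitality and simplicity of $A$ (for $n>1$) plus the density theorem give $\mathcal{M}(A)=\operatorname{End}_K(A)\cong M_N(K)$ with $K=Z^{-1}Z$, the relation $L_{z^{-1}b}=z^{-1}L_b$ gives $Z^{-1}\mathcal{M}(B)=\mathcal{M}(A)$, Formanek's theorem applies to the order $\mathcal{M}(B)$, and your pinching bound $Z\cdot\mathrm{id}\subseteq Z(\mathcal{M}(B))\subseteq z_0^{-1}Z\cdot\mathrm{id}$ (using $\psi(B)\subseteq B$ and $B\cap Z^{-1}Z=Z$) descends the finite generation from $Z(\mathcal{M}(B))$ back to $Z$, after which $b\mapsto L_b$ embeds $B$. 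The key steps you flag are exactly the ones that need care, and they are handled correctly; the only point worth making explicit is that your rescaling-by-$z_0$ descent uses that the finitely generated $Z(\mathcal{M}(B))$-module furnished by Formanek sits inside $M_N(K)$ (true in the standard proof of that theorem, where $c\,\mathcal{M}(B)\subseteq\sum Z(\mathcal{M}(B))b_i$ with $b_i$ in the quotient algebra, but not literally part of the one-line statement the paper quotes). What your approach buys is generality and conceptual economy --- it works verbatim for a central order in any finite-dimensional unital central simple algebra, with no need for the ad hoc coordinate-extracting formulas --- at the cost of invoking Formanek, the density theorem, and the centroid-equals-center argument, and of losing the explicit exponent of $z$ that the paper's computation provides.
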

	
	\begin{proof}
		Denote $\overline{B} = Z^{-1}B$. Note that $F = Z^{-1}Z$. The algebra $\overline{B}$ has a basis $e_1,\dots,e_n, e_{ij}$ over the field $F$, where $e_i=(0,\dots,0,1,0,\dots,0)$ and the unit is at the $i$-th place. There is $z\in Z$ such that $e_i = \frac{f_i}{z}$, $e_{ij} = \frac{f_{ij}}{z}$ and $f_i,f_{ij}\in B$. Let $a=(\alpha_1,\dots,\alpha_n) + \sum\beta_{ij}e_{ij} \in B$, $\alpha_i,\beta_{ij}\in F$. If $F\neq m$, then
		\[
		\sum\limits_{i=1}^n (f_{im}(f_{mm}(af_{kk})))f_{ki} = \beta_{mk} z^4.
		\]
		If $k=m$, then for $s\neq k$
		\[\sum\limits_{i=1}^n (f_{ik}(f_{kk}((f_{kk}(af_{kk}))f_{ks})))f_{si} = \beta_{kk}z^6.\]
		In addition, for $s\neq k$ we have
		\[\sum\limits_{i=1}^n (f_{ik}(f_{km}(f_{mm}((f_{kk}(af_{kk}))f_{km}))))f_{mi} = \alpha_k z^7.\]
		Thus, $\alpha_k z^7, \beta_{km} z^6 \in B\cap F= Z$, whence
		\[az^7 \in \sum\limits_{i=1}^n Z e_i + \sum\limits_{i,j=1}^n Ze_{ij}.\] Thus, the $Z$-module $Bz^7$ is embedded in a finitely generated $Z$-module. It remains to note that the $Z$-module $Bz^7$ is isomorphic to the $Z$-module~$B$.
	\end{proof}
	
	Let us pass to another example of a simple right-symmetric algebra.
	
	\begin{lemma}
		Let $B$ be an algebra which is a central order in the algebra $\mathcal{A} = V_2 + M_2(F)$, where the spaces $F(1,0)$ and $F(0,1)$ are not closed under the operation $\bullet$. Then $B$ can be embedded in a finitely generated $Z(B)$-module.
		
	\end{lemma}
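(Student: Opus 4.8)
The plan is to repeat the localization argument of the previous lemma inside $\mathcal{A}=V_2+M_2(F)$. First I set $\overline{B}=Z^{-1}B$; over the field $F=Z^{-1}Z$ it is isomorphic to $\mathcal{A}$, and a direct check shows that $e:=e_{11}+e_{22}$ is the unit of $\mathcal{A}$, so $F$ is identified with $Fe\subseteq\overline{B}$ and, recalling the identity $A\cap Z^{-1}Z=Z$, we get $\overline{B}\cap F=Z$. Fix the $F$-basis $v_1=(1,0)$, $v_2=(0,1)$, $e_{11},e_{12},e_{21},e_{22}$ of $\overline{B}$ and choose $0\neq z\in Z$ with $f_i:=zv_i\in B$ ($i=1,2$) and $f_{ij}:=ze_{ij}\in B$ for all $i,j$. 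Writing an arbitrary $a\in B$ as $a=\alpha_1v_1+\alpha_2v_2+\sum_{i,j}\beta_{ij}e_{ij}$ with $\alpha_i,\beta_{ij}\in F$, it suffices to produce, for each of the six coordinates $\gamma$, a product $P_\gamma$ of $a$ with the $f$'s equal in $\overline{B}$ to $\gamma w$ for one fixed $w\in Z\setminus\{0\}$ independent of $a$: then $\gamma w\in B\cap Fe=Z$, so $aw\in\sum_i Zv_i+\sum_{i,j}Ze_{ij}$, which is a finitely generated $Z$-module, and $B\cong Bw$ embeds in it via the injective $Z$-linear map $x\mapsto wx$ (injective since $w$, being a nonzero element of $Z$, is not a zero divisor).

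To build the $P_\gamma$ I exploit three features of the multiplication. Inside $M_2(F)$ the product is ordinary matrix multiplication; right multiplication by a matrix unit preserves both $V_2$ and $M_2(F)$, so $a\cdot f_{pq}=z(\alpha_p v_q+\sum_r\beta_{rp}e_{rq})$; and left multiplication by $e_{st}$ sends a row $v$ to $ve_{st}+[e_{st},[R^{\bullet}_{v}]]$, so if the $s$-th coordinate of $v$ vanishes only the correction matrix survives. Combining these, $f_{st}\cdot(a\cdot f_{pq})$ with $s\neq q$ is a purely matrix element $z^2\bigl(\alpha_p[e_{st},[R^{\bullet}_{v_q}]]+\beta_{tp}e_{sq}\bigr)$, and each of its entries is recovered as a multiple of $e$ by matrix-unit multiplications on the two sides. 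Running over the admissible index patterns, this produces in $Z$ (after multiplication by a fixed power of $z$) scalars of the two shapes $\alpha_p\lambda$ and $\alpha_p\lambda+\beta_{tp}$, where the $\lambda$ are fixed entries of $[R^{\bullet}_{v_1}]$ and $[R^{\bullet}_{v_2}]$; among these entries appear $(v_1\bullet v_1)_2$ and $(v_2\bullet v_2)_1$, which are nonzero precisely because $F(1,0)$ and $F(0,1)$ are not $\bullet$-closed. The third feature is the rule $wu=w\bullet u+\psi_C(u)\pi(w)$: the element $(f_i\cdot f_i)\cdot f_{jj}$ with $j\neq i$ has its matrix part and the $v_i$-component of its row part annihilated, leaving $z^3(v_i\bullet v_i)_j\,v_j$; left-multiplying such a pure row by a suitable $f_{st}$ again gives a pure matrix feeding further scalars of the above shapes into $Z$.

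Once a scalar $\alpha_p\lambda$ with $\lambda\neq0$ has been secured for each $p$, I clear the denominator of $\lambda$ into $Z$ to get $\alpha_p w_0\in Z$ for a fixed $w_0\in Z\setminus\{0\}$; substituting this into the relations $\alpha_p\lambda+\beta_{tp}\in Z$ yields $\beta_{ij}w_1\in Z$ for a fixed $w_1\in Z\setminus\{0\}$. Taking $w$ a suitable power of $zw_0w_1$ (nonzero, since nonzero elements of $Z$ are non-zero-divisors of $A$), all six coordinates of $aw$ lie in $Z$, and the reduction of the first paragraph completes the proof.

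The main obstacle is the middle step: arranging products that isolate $\alpha_p$ rather than only the combinations $\alpha_p\lambda+\beta_{tp}$, whose coefficient $\lambda$ might vanish. The hypothesis that neither $F(1,0)$ nor $F(0,1)$ is closed under $\bullet$ is exactly what forces the structure constants $(v_1\bullet v_1)_2$ and $(v_2\bullet v_2)_1$ to be nonzero, and the technical heart is the careful bookkeeping of the correction terms $[e_{st},[R^{\bullet}_{v_q}]]$ and $\psi_C(u)\pi(w)$ — and, in those residual forms of $\bullet$ where the $\bullet$-structure constants above do not by themselves suffice, of the mixed terms arising from the rule $Av=vA+[A,[R^{\bullet}_{v}]]$ together with $C$ — needed to guarantee that the resulting linear system in $\alpha_1,\alpha_2$ is non-degenerate.
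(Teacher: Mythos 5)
Your proposal is correct and follows the paper's overall scheme exactly: localize at $Z$, clear denominators of a basis, hit an arbitrary $a\in B$ with fixed elements of $B$ until each coordinate times a fixed nonzero element of $Z$ lands in $B\cap Z^{-1}Z=Z$, and then use that $x\mapsto wx$ embeds $B$ isomorphically onto $Bw$ inside $\sum Zv_i+\sum Ze_{ij}$. Where you differ is in the extraction gadget. The paper first recovers the row coordinates by explicit products that involve the row lifts $f_1,f_2$ themselves and a division by $\nu_1^3$ (resp.\ $\mu_4^3$), getting $\alpha z^4,\beta z^5$ exactly, and then strips them off to read the matrix coordinates ($\alpha_{ij}z^8$), with the two cases $\delta_1\neq0$ and $\gamma_4\neq0$ treated by symmetric formulas. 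You instead never multiply by $f_1,f_2$: you use only $a$ and the matrix-unit lifts, exploiting that $f_{st}\cdot(a\cdot f_{pq})$ with $s\neq q$ is the pure matrix $z^2\bigl(\alpha_p[e_{st},[R^{\bullet}_{v_q}]]+\beta_{tp}e_{sq}\bigr)$, whose entries away from position $(s,q)$ isolate $\alpha_p\,(v_1\bullet v_1)_2$ or $\alpha_p\,(v_2\bullet v_2)_1$ — exactly the constants the hypothesis makes nonzero — and you stay integral by multiplying through by their numerators instead of dividing, at the cost of a less explicit final exponent. Both routes are computations of the same Formanek type; yours is arguably more systematic (one uniform family of products, no case-by-case division), the paper's gives sharper explicit bounds ($z^8$). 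Two small points: your closing worry about ``residual forms of $\bullet$'' and a possibly degenerate linear system in $\alpha_1,\alpha_2$ is unfounded — each product $f_{21}\cdot(a\cdot f_{p1})$ involves only the single unknown $\alpha_p$, whose coefficient $\pm(v_1\bullet v_1)_2$ sits at a position distinct from $(s,q)$, so no mixed terms or use of $C$ are needed; and the identity you invoke should read $B\cap Z^{-1}Z=Z$, not $\overline{B}\cap F=Z$ (you use the correct form later).
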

	
	\begin{proof}
		Denote $\overline{B} = Z^{-1}B$. Note that $F = Z^{-1}Z$. The algebra $\overline{B}$ has a basis $e_1 = (1,0), e_2 = (0,1)$, $e_{11},e_{12},e_{21},e_{22}$ over the field $F$. Let the bilinear operation $\bullet$ on $V_2$ be defined as follows:
		\[e_1\bullet e_1 = (\gamma_1, \delta_1), e_1\bullet e_2 = (\gamma_2,\delta_2), e_2\bullet e_1 = (\gamma_3,\delta_3), e_2\bullet e_2 = (\gamma_4,\delta_4).\]
		Let $C=\begin{pmatrix}
			\varepsilon_1 & \varepsilon_2 \\
			\varepsilon_3 & \varepsilon_4
		\end{pmatrix}$.
		There is $z\in Z$ such that $e_i = \frac{f_i}{z}$, $e_{ij} = \frac{f_{ij}}{z}$, $\gamma_i = \frac{\mu_i}{z}$, $\delta_i = \frac{\nu_i}{z}$, $\varepsilon_i = \frac{\xi_i}{z}$ and $f_i,f_{ij}\in B$, $\mu_i,\nu_i, \xi_i \in Z$. Let $a = (\alpha,\beta) + \sum\alpha_{ij} e_{ij}$.
		
		1) Suppose $\delta_1\neq 0$. Then
		\begin{gather*}
			\chi_1(a)=-\frac{1}{\nu_1^3}(f_{21}(((f_1(af_{11}))f_{22})f_{21}))(\nu_1 f_{11} - \nu_1 f_{22} + (\nu_3 - \mu_1)f_{21}) = \alpha z^4,\\
			%\chi_1(a)=-\frac{1}{\nu_1\mu_4^2}(f_{12}((f_1(af_{11}))f_{22}))(-\mu_4 f_{11} + \mu_4 f_{22} + (-\nu_4+\mu_2)f_{12}) = \alpha z^3,\\
			\chi_2(a)=-\frac{1}{\nu_1^3}(f_{21}(((f_1((af_{22})f_{21}))f_{22})f_{21}))(\nu_1 (f_{11} - f_{22}) + (\nu_3 - \mu_1)f_{21}) = \beta z^5.
		\end{gather*}
		But then
		\[f_{1i}(az^6-\chi_1(a)f_1z-\chi_2(a)f_2)f_{j1} + f_{2i}((az^6-\chi_1(a)f_1z-\chi_2(a)f_2))f_{j2} = \alpha_{ij}z^8.\]
		Thus, $\alpha z^4, \beta z^5, \alpha_{ij}z^8 \in B\cap F = Z$, whence $az^8\in Ze_1 + Ze_2 + \sum Ze_{ij}$. Thus, the $Z$-module $Bz^8$ is embedded in a finitely generated $Z$-module. It remains to note that the $Z$-module $Bz^8$ is isomorphic to the $Z$-module $B$.
		
		2) Suppose $\gamma_4 \neq 0$. Then
		
		\begin{gather*}
			\frac{1}{\mu_4^3}(f_{12}(((f_2(af_{22}))f_{11})f_{12}))(-\mu_4 f_{11} + \mu_4 f_{22} + (\mu_2-\nu_4)f_{12}) = \beta z^4,\\
			\frac{1}{\mu_4^3}(f_{12}(((f_2((af_{11})f_{12}))f_{11})f_{12}))(-\mu_4 f_{11} + \mu_4 f_{22} + (\mu_2-\nu_4)f_{12}) = \alpha z^5.
		\end{gather*}
		Then, as in first case, we obtain $az^8 \in Ze_1 + Ze_2 + \sum Ze_{ij}$. Thus, the $Z$-module $Bz^8$ is embedded in a finitely generated $Z$-module. It remains to note that the $Z$-module $Bz^8$ is isomorphic to the $Z$-module $B$.
		
	\end{proof}
	
	Thus, we have proved the following theorem.
	
	\begin{theorem}
		Let $B$ be a right-symmetric algebra that is a central order either in the matrix RS-algebra or in the algebra $V_2+M_2(F)$, and the spaces $F(1,0)$ and $F(0,1)$ are not closed under the operation $\bullet$. Then $B$ is embedded in a finitely generated $Z(B)$-module.
	\end{theorem}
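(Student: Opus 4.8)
The plan is to assemble the theorem directly from the two lemmas already proved, since the theorem is essentially a packaging of those results. The statement splits into two mutually exclusive cases according to which simple right-symmetric algebra $Z^{-1}B$ is. If $B$ is a central order in the matrix RS-algebra $F^n + M_n(F)$, then Lemma~1 applies verbatim and yields the embedding of $B$ into a finitely generated $Z(B)$-module. If instead $B$ is a central order in $\mathcal{A} = V_2 + M_2(F)$ with $F(1,0)$ and $F(0,1)$ not closed under $\bullet$, then Lemma~2 applies. So the core of the proof is just the sentence: "This follows immediately from Lemma~1 and Lemma~2."

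The one point that needs a word of care is the hypothesis matching. In Lemma~2 the non-closedness hypothesis is used in the case split: the proof shows that either $\delta_1 \neq 0$ or $\gamma_4 \neq 0$ (these being the two subcases treated), and this dichotomy is exactly what "$F(1,0)$ and $F(0,1)$ are not closed under $\bullet$" guarantees --- if $\delta_1 = 0$ then $e_1 \bullet e_1 \in F(1,0)$, and if moreover $\gamma_4 = 0$ then $e_2 \bullet e_2 \in F(0,1)$, so at least one of the spaces would be $\bullet$-closed in a way contradicting the hypothesis (modulo checking the full multiplication tables, which Lemma~2's proof already absorbed). I would therefore remark that the hypothesis of the theorem is precisely the hypothesis of Lemma~2, so no extra argument is needed; the case $n = 1$ for the matrix RS-algebra is excluded automatically because that algebra is not simple (Example~1 requires $n > 1$), hence cannot be of the form $Z^{-1}B$ here.

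There is essentially no obstacle: the theorem is a corollary of the two lemmas and the proof is a two-line case analysis. If anything, the only thing to be careful about is not to claim more than the lemmas give --- e.g. the theorem should not assert anything about right-symmetric algebras that are central orders in $\mathcal{A}$ when both $F(1,0)$ and $F(0,1)$ \emph{are} closed under $\bullet$, and indeed the statement explicitly restricts to the non-closed case. So I would write:

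\begin{proof}
	If $Z^{-1}B$ is the matrix RS-algebra (necessarily with $n>1$, since for $n=1$ the algebra $F^n+M_n(F)$ is not simple), then the claim is Lemma~1. If $Z^{-1}B = V_2 + M_2(F)$ with $F(1,0)$ and $F(0,1)$ not closed under $\bullet$, then the claim is Lemma~2. In either case $B$ embeds into a finitely generated $Z(B)$-module.
\end{proof}
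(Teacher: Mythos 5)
Your proposal is correct and matches the paper exactly: the theorem is stated immediately after Lemmas 1 and 2 with the remark ``Thus, we have proved the following theorem,'' i.e.\ the paper's proof is precisely the two-case assembly you give. Your additional remarks on the hypothesis matching and the exclusion of $n=1$ are harmless and consistent with the lemmas as stated.
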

	
	\section{Examples of simple right-alternative superalgebras}
	
	\smallskip\textbf{Definition.} A $\mathbb{Z}_2$-graded algebra $B=A+M$, $AM+MA\subseteq M$, $A^2+M^2\subseteq A$, is called a \textbf{\textit{right-alternative superalgebra}} if for any homogeneous elements $x,y,z\in A\cup M$ we have
	\[(x,y,z) + (-1)^{|z||y|}(x,z,y) = 0,\]
	where $|a| = 0$ if $a\in A$ and $|a| = 1$ if $a\in M$.
	
	\smallskip A superalgebra $B=A+M$ is called a \textbf{\textit{a superalgebra of abelian type}} if the even part $A$ is associative and commutative, and the odd part $M$ is an associative $A$-bimodule. The following two right alternative superalgebras of abelian type are well known.
	
	\medskip\textbf{Example 3.} \textit{Let $F^n$ be a direct sum of $n$ copies of $F$ and $[F^n]$ be a vector space isomorphic to $F^n$. Consider $B_{n|n} = F^n + [F^n]$ --- the direct sum of vector spaces with the following multiplication:
		\begin{gather*}
			x \cdot y = xy,\\
			[x] \cdot [y] = xy,\\
			x \cdot [y] = [xy],\\
			[x]\cdot y = [x \tau(y)],
		\end{gather*}
		where $x,y\in F^n$, $xy$ is the product in $F^n$, $\tau(a_1,\dots,a_n) = (a_2,\dots, a_n, a_1)$. Then $B_{n|n}$ is a simple non-associative right-alternative superalgebra of abelian type.
	}
	
	\medskip\textbf{Example 4.} \textit{Let $[F^2]$ be a vector space isomorphic to $F^2$. Consider $B_{2|2}(\nu) = F^2 + [F^2]$ --- the direct sum of vector spaces with the following multiplication:
		\begin{gather*}
			x \cdot y = xy, \\
			[x] \cdot [y] = x\chi(y),\\
			x \cdot [y] = [xy],\\
			[x] \cdot y = [xy^{*}],
		\end{gather*}
		where $x,y\in F^2$, $xy$ is the product in $F^2$, $(a_1,a_2)^* = (a_2,a_1)$, $\chi(a_1,a_2) = (a_1+a_2, a_1+\nu a_2)$, $\nu\in F$. Then $B_{2|2}(\nu)$ is a simple non-associative right-alternative superalgebra of abelian type.
	}
	
	\medskip In the paper \cite{PS1} S.~V.~Pchelintsev and O.~V.~Shashkov proved that over an algebraically closed field of characteristic~0 any central simple finite-dimensional right-alternative superalgebra of abelian type is either associative, or isomorphic to the superalgebra $B_{n|n}$, or isomorphic to the superalgebra $B_{2|2}(\nu)$ for some $\nu$ in the ground field.
	
	\smallskip In the paper of J.~P.~Da~Silva, L.~S.~I.~Murakami and I.~P.~Shestakov \cite{Shest1} right-alternative superalgebras arose whose even part is an algebra of second-order matrices. Such algebras are called asymmetric doubles.
	
	\medskip\textbf{Example 5.} \textit{Let $w\in M_2(F)$ be a fixed matrix with nonzero trace and $[M_2(F)]$ be a vector space isomorphic to $M_2(F)$. Consider $B_{4|4}(w) = M_2(F) + [M_2(F)]$ --- the direct sum of vector spaces with the following multiplication:
		\begin{gather*}
			x \cdot y = xy,\\
			[x] \cdot [y] = \frac{\mathrm{tr}(y)}{\mathrm{tr}(w)} x,\\
			[x]\cdot y = [x\overline{y}],\\
			x \cdot [y] = [xy - ([x][y])^{D}],
		\end{gather*}
		where $x,y\in M_2(F)$, $xy$ is the product in $M_2(F)$, $a^{D} = aw - wa$ for any $a\in M_2(F)$, and $\overline{a}$ is the symplectic involution applied to an element $a\in M_2(F)$. Then $B_{4|4}(w)$ is a simple non-associative right-alternative superalgebra.
	}
	
	\medskip In the paper \cite{PS2} S.~V.~Pchelintsev and O.~V.~Shashkov proved that any asymmetric double over a field of characteristic not 2 is either alternative or isomorphic to the superalgebra $B_{4|4}(w)$ for some $w\in M_2(F)$, $\mathrm{tr}(w)\neq 0$.
	
	\section{Central orders in simple right-alternative superalgebras}
	
	Let $B = A + M$ be a $Z_2$-graded algebra and $Z=Z(B)_0$ be the even part of the center of $B$, where $Z$ does not contain any zero divisors of the entire algebra $B$. Then, as in the case of ordinary algebras, we can define a $Z_2$-graded algebra $Z^{-1}B$.
	
	As before, we will call the $Z_2$-graded algebra $B$ \textbf{\textit{central order}} in the $Z_2$-graded algebra $Z^{-1}B$. The algebra $Z^{-1}B$ can be considered over the field of fractions $Z^{-1}Z$.
	
	\medskip Let us begin our study with central orders in superalgebras of abelian type.
	
	\begin{lemma}
		Let $B$ be a right-alternative superalgebra which is a central order in the superalgebra $B_{n|n}$ and $Z = Z(B)_0$. Then $B$ can be embedded into a finitely generated $Z$-module.
	\end{lemma}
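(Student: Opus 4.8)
The plan is to follow the pattern of the two preceding lemmas: pass to $\overline B=Z^{-1}B$, fix a single denominator $z\in Z$ for the standard basis of $B_{n|n}$, and then show that for every $a\in B$ a suitable power $z^{N+1}a$ lies in the finitely generated $Z$-module spanned by the chosen denominators of those basis vectors. So I would begin by identifying $\overline B\cong B_{n|n}$ over the field $F:=Z^{-1}Z$; let $e_1,\dots,e_n$ be the orthogonal idempotents of the even part $F^n$ and $[e_1],\dots,[e_n]$ the corresponding odd basis, and note that $\mathbf 1=e_1+\dots+e_n$ is the identity element of $B_{n|n}$, i.e.\ the image of $1\in F$ under $F\hookrightarrow\overline B$. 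Choose $0\neq z\in Z$ with $f_i:=ze_i\in B$ and $g_i:=z[e_i]\in B$ for all $i$, and write an arbitrary element as $a=\sum_i\alpha_ie_i+\sum_i\beta_i[e_i]$ with $\alpha_i,\beta_i\in F$.

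The first step isolates the $i$-th component of $a$. From $e_ie_j=\delta_{ij}e_i$ and $e_i\cdot[e_j]=\delta_{ij}[e_i]$ one gets $f_i\cdot a=z(\alpha_ie_i+\beta_i[e_i])$. Multiplying this on the right by $f_i$ and using $[e_i]\cdot e_i=[e_i\tau(e_i)]=[e_ie_{i-1}]=0$ (here $n>1$ is essential) gives $(f_i\cdot a)\cdot f_i=z^2\alpha_ie_i$; subtracting this from $z(f_i\cdot a)$ gives $z^2\beta_i[e_i]\in B$, and then $(z^2\beta_i[e_i])\cdot g_i=z^3\beta_ie_i\in B$ because $[e_i]\cdot[e_i]=e_i$. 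Thus $z^2\alpha_ie_i$ and $z^3\beta_ie_i$ lie in $B$ — but this is not yet enough, since $e_i\notin F$.

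The crucial step, and the one I expect to be the main obstacle, is to upgrade an element of the form $z^m\lambda e_i$ with $\lambda\in F$ to the scalar $z^{m'}\lambda\in F$. The even part $F^n$ cannot help (its product is coordinatewise), so one must exploit the odd generators together with the cyclic shift $\tau$. The key observation is that, applied to $u=z^m\lambda e_i$, the operation $u\mapsto(g_{i-1}\cdot u)\cdot g_{i-1}$ produces $z^{m+2}\lambda e_{i-1}$ (indices read mod $n$): indeed $g_{i-1}\cdot(z^m\lambda e_i)=z^{m+1}\lambda[e_{i-1}\tau(e_i)]=z^{m+1}\lambda[e_{i-1}]$, and then right multiplication by $g_{i-1}$ turns $[e_{i-1}]$ into $e_{i-1}$. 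Iterating this shift $n-1$ times transports the coefficient $\lambda$ onto each of $e_1,\dots,e_n$; rescaling the $n$ outputs by appropriate powers of $z$ so that their $z$-degrees agree, and adding them up, yields $z^N\lambda\,\mathbf 1\in B$ for a fixed $N=N(n)$ (one may take $N=2n+1$, which works simultaneously for $\lambda=\alpha_i$ and $\lambda=\beta_i$). Since $z^N\lambda\,\mathbf 1$ is just the scalar $z^N\lambda\in F$ viewed in $\overline B$, this gives $z^N\lambda\in B\cap F=Z$, the equality $B\cap F=Z$ being obtained exactly as in the remark preceding the first lemma.

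Applying this with $\lambda=\alpha_i$ and with $\lambda=\beta_i$ for every $i$ shows $z^N\alpha_i,z^N\beta_i\in Z$, whence
\[z^{N+1}a=\sum_i(z^N\alpha_i)f_i+\sum_i(z^N\beta_i)g_i\in\sum_iZf_i+\sum_iZg_i\subseteq B.\]
The module on the right is finitely generated over $Z$, and $a\mapsto z^{N+1}a$ is an injective homomorphism of $Z$-modules because $z$ is not a zero divisor of $B$; hence $B\cong Bz^{N+1}$ embeds into a finitely generated $Z$-module, as required.
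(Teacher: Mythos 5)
Your proof is correct and takes essentially the same route as the paper: pass to $\overline{B}=Z^{-1}B$, clear denominators with a single $z\in Z$, use products with the $f_i$ and $g_i$ (exploiting $\tau(e_i)=e_{i-1}$ and $[e_i]\cdot[e_i]=e_i$) to transport each coefficient onto all the idempotents and sum to a scalar multiple of the identity, then use $B\cap F=Z$ and the isomorphism $B\cong Bz^{N+1}$. The only differences are cosmetic: you spell out the extraction of the odd coefficients $\beta_i$ (which the paper subsumes under ``similarly'') and land on the exponent $2n+2$ rather than the paper's $2n$, which does not affect the conclusion.
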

	
	\begin{proof}
		Denote $\overline{B} = Z^{-1}B$, $F = Z^{-1}Z$. The algebra $\overline{B}$ has a basis $e_1,\dots,e_n,[e_1],\dots,[e_n]$, where $e_i=(0,\dots,0,1,0,\dots,0)$ and the unit is at the $i$-th place. There exists $z\in Z$ such that $e_i = \frac{f_i}{z}$, $[e_i]=\frac{g_i}{z}$. Let $a=(\alpha_1,\dots,\alpha_n) + [(\beta_1,\dots,\beta_n)] \in B$, $\alpha_i,\beta_i\in F$. Let us introduce the notation
		\begin{gather*}
			c_n := ((f_na)f_n) = (0,\dots,0,\alpha_n)z^2,\\
			c_{i-1} := g_{i-1}(g_{i-1}c_i) = \alpha_n [e_{i-1}] z^{2(n-(i-2))} = \alpha_n g_{i-1} z^{2(n-(i-2)) - 1}
		\end{gather*}
		for $2\le i\le n$. Thus,
		\[(g_1+\dots+g_n)\sum\limits_{i=1}^n c_iz^{2(i-1)} = \alpha_n z^{2n}.\]
		Hence $\alpha_n z^{2n}\in B \cap F = Z$. Similarly, $\alpha_i z^{2n} \in B\cap F = Z$ for all $i\in\{1,\dots,n\}$. This means that
		\[az^{2n} \in \sum\limits_{i=1}^n (Ze_i + Z[e_i]).\]
		Thus, the $Z$-module $Bz^{2n}$ is embedded in a finitely generated $Z$-module. It remains to note that the $Z$-module $Bz^{2n}$ is isomorphic to the $Z$-module~$B$.
	\end{proof}
	
	\begin{lemma}
		Let $B$ be a right alternative superalgebra that is a central order in the superalgebra $B_{2|2}(\nu)$ and $Z = Z(B)_0$. Then $B$ is embedded in a finitely generated $Z$-module.
	\end{lemma}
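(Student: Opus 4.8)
The plan is to follow the pattern of the preceding lemma: localize at the center, clear the denominators of a basis all at once, and then exhibit explicit products inside $B$ that recover the coordinates of an arbitrary element up to a bounded power of the common denominator. Set $\overline{B}=Z^{-1}B$ and $F=Z^{-1}Z$, so that $\overline{B}$ is the superalgebra $B_{2|2}(\nu)$ over $F$ with basis $e_1=(1,0)$, $e_2=(0,1)$, $[e_1]$, $[e_2]$; here $e_1,e_2$ are orthogonal idempotents and $e_1+e_2$ is the unit of $\overline{B}$. Since $B$ is a central order in $\overline{B}$, there is a nonzero $z\in Z$ with $f_i:=ze_i\in B$ and $g_i:=z[e_i]\in B$ for $i=1,2$. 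It is worth noting at the outset that the parameter $\nu$ need not be cleared: the only product in which it survives is $[e_2]\cdot[e_2]=\nu e_2$, and the whole argument will be arranged so that this product is never used.

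For an arbitrary $a=\alpha_1e_1+\alpha_2e_2+\beta_1[e_1]+\beta_2[e_2]\in B$ with $\alpha_i,\beta_i\in F$, a direct computation from the multiplication table of $B_{2|2}(\nu)$ (using $e_1e_2=0$ together with the twisted maps $x\mapsto x^{*}$ and $\chi$) gives $a\cdot f_1=\alpha_1f_1+\beta_2g_2$ and $a\cdot f_2=\alpha_2f_2+\beta_1g_1$, whence
\begin{gather*}
f_1(af_1)=\alpha_1z^2e_1,\qquad f_2(af_2)=\alpha_2z^2e_2,\\
f_1(af_2)=\beta_1z^2[e_1],\qquad f_2(af_1)=\beta_2z^2[e_2],
\end{gather*}
all of these being plainly elements of $B$. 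What remains is to convert an expression such as $\alpha_1z^2e_1$ into a scalar, that is, into $\alpha_1z^{N}(e_1+e_2)$ for a uniform $N$. Since multiplication inside the even part cannot carry $e_1$ to $e_2$, one routes through the odd part: $g_2\cdot(\alpha_1z^2e_1)=\alpha_1z^3[e_2]$ because $[e_2]\cdot e_1=[e_2e_1^{*}]=[e_2]$, then $(\alpha_1z^3[e_2])\cdot g_1=\alpha_1z^4e_2$ because $[e_2]\cdot[e_1]=e_2\chi(e_1)=e_2$, and adding $z^2f_1(af_1)=\alpha_1z^4e_1$ gives $\alpha_1z^4(e_1+e_2)\in B\cap F=Z$. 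The three analogous chains (the two that start from an odd coordinate $\beta_iz^2[e_i]$ cost one extra factor of $z$, since one must first turn $[e_i]$ into an idempotent) yield $\alpha_iz^4,\ \beta_iz^5\in Z$; in particular all of $\alpha_iz^5$ and $\beta_iz^5$ lie in $Z$.

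Hence $az^5=\sum_i(\alpha_iz^5)e_i+\sum_i(\beta_iz^5)[e_i]$ belongs to the finitely generated $Z$-module $Ze_1+Ze_2+Z[e_1]+Z[e_2]$, and since $z$ is a non-zero-divisor of $B$ the map $a\mapsto az^5$ is an injective homomorphism of $Z$-modules $B\to Bz^5$, so $B$ embeds into a finitely generated $Z$-module. The step that needs genuine care, and therefore the main obstacle, is the second paragraph: one has to pin down, among the products of $a$ with $f_1,f_2,g_1,g_2$, the exact bracketings and orders that first project onto a single basis vector and then spread it symmetrically over the two idempotents, all while avoiding the product $[e_2]\cdot[e_2]$ so that $\nu$ (which may be zero, or a non-invertible element of $Z$) never slips into a denominator. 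This is of the same nature as, though somewhat more intricate than, the computation in the proof of the lemma for $B_{n|n}$.
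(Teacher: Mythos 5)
Your proof is correct and follows essentially the same strategy as the paper: localize at $Z$, clear denominators of the basis by a common $z\in Z$, and use explicit products of $a$ with the cleared basis elements to show that each coordinate times a fixed power of $z$ lands in $B\cap F=Z$, so that $B\cong Bz^5$ embeds in $Ze_1+Ze_2+Z[e_1]+Z[e_2]$. The paper merely packages the coefficient extraction into four condensed identities (obtaining the exponent $4$ instead of your $5$), but the underlying mechanism is identical.
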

	
	\begin{proof}
		Denote $\overline{B} = Z^{-1}B$, $F = Z^{-1}Z$. In the algebra $\overline{B}$ there is a basis $(1,0),(0,1),[(1,0)],[(0,1)]$ over the field $F$. There exists $z\in Z$ such that $(1,0) = \frac{b_1}{z}$, $(0,1) = \frac{b_2}{z}$, $[(1,0)] = \frac{d_1}{z}$, $[(0,1)] = \frac{d_2}{z}$ and $b_i,d_i\in B$. Let $a=(\alpha_1,\alpha_2) + [(\beta_1,\beta_2)] \in B$, $\alpha_i,\beta_i\in F$. Then
		\begin{gather*}
			(b_1 + b_2 + d_2)(b_1(ab_1)) = \alpha_1 z^3,\\
			(b_1 + b_2 + d_1)(b_2(ab_2)) = \alpha_2 z^3,\\
			(b_1 + b_2 + d_2)((b_1(ab_2))d_1) = \beta_1 z^4,\\
			(b_1 + b_2 + d_1)((b_2(ab_1))d_1) = \beta_2 z^4.
		\end{gather*}
		Thus, $\alpha_i z^3, \beta_i z^4 \in B\cap F = Z$, whence
		\[az^4 \in Z(1,0) + Z(0,1) + Z[(1,0)] + Z[(0,1)].\] Thus, the $Z$-module $Bz^4$ is embedded in the finitely generated $Z$-module. It remains to note that the $Z$-module $Bz^4$ is isomorphic to the $Z$-module~$B$.
	\end{proof}
	
	\medskip Next, consider central orders in asymmetric doubles. As is easy to see, the peculiarity of these algebras is that their even part is a central order in the algebra of second-order matrices.
	
	\begin{lemma}
		Let $B$ be a right alternative superalgebra which is a central order in the superalgebra $B_{4|4}(w)$ and $Z = Z(B)_0$. Then $B$ can be embedded into a finitely generated $Z$-module.
	\end{lemma}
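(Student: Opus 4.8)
The plan is to follow the scheme of the previous lemmas. Write $\overline B = Z^{-1}B$ and $F = Z^{-1}Z$, so that $\overline B$ is, as a $\mathbb Z_2$-graded algebra over $F$, the superalgebra $M_2(F)+[M_2(F)]$ of Example~5, with $w\in M_2(F)$ and $\mathrm{tr}(w)\neq 0$ in $F$. I choose $z\in Z$, $z\neq 0$, large enough that the matrix units and their odd copies are cleared, i.e.\ $e_{ij} = f_{ij}/z$ and $[e_{ij}] = g_{ij}/z$ with $f_{ij}\in B_0$, $g_{ij}\in B_1$, and large enough in addition that $z\,\mathrm{tr}(w) =: \tau$ lies in $Z$ (possible since $\mathrm{tr}(w)\in F = Z^{-1}Z$); note $\tau\neq 0$. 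Since $B = B_0 + B_1$ is graded, every $a\in B$ splits as $a = a_0 + a_1$ with $a_0 = \sum_{i,j}\alpha_{ij}e_{ij}\in B_0$ and $a_1 = \sum_{i,j}\beta_{ij}[e_{ij}]\in B_1$, $\alpha_{ij},\beta_{ij}\in F$; it is enough to show $z^{N}\alpha_{ij},\,z^{N}\beta_{ij}\in B\cap F = Z$ for a fixed $N$. Put $\mathbf 1 := e_{11}+e_{22}$, the identity of $\overline B$, so that $Z(\overline B)_0 = F\mathbf 1$.

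For the even part the relevant point is that $B_0$ is a subalgebra of the associative algebra $\overline B_0 = M_2(F)$ and is therefore itself associative, so the usual matrix-unit manipulation (the heart of Formanek's theorem) is available with no parenthesization subtleties: for each $(i,j)$,
\[
\sum_{k=1}^{2} f_{ki}\,a_0\,f_{jk} \;=\; z^{2}\sum_{k=1}^{2} e_{ki}\,a_0\,e_{jk} \;=\; z^{2}\alpha_{ij}\,\mathbf 1 ,
\]
and since the left-hand side lies in $B$ and the right-hand side lies in $Z(\overline B)_0 = F\mathbf 1$, we get $z^{2}\alpha_{ij}\in B\cap F = Z$.

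For the odd part --- the only genuinely new step --- observe that the multiplications $[x]\cdot y$ and $x\cdot[y]$ of Example~5 merely permute odd coordinates (and in fact are never needed), while the only product returning to $B_0$, namely $[x]\cdot[y] = \frac{\mathrm{tr}(y)}{\mathrm{tr}(w)}\,x$, keeps the \emph{first} argument intact. Thus placing $a_1$ on the left and a fixed basis vector on the right reproduces all the coordinates $\beta_{ij}$ at once:
\[
a_1\cdot g_{11} \;=\; z\sum_{i,j}\beta_{ij}\,[e_{ij}]\cdot[e_{11}] \;=\; \frac{z}{\mathrm{tr}(w)}\sum_{i,j}\beta_{ij}\,e_{ij},
\]
so $\widetilde a := \tau\cdot(a_1\cdot g_{11})$ is an element of $B_0$ whose $(i,j)$-entry equals $z^{2}\beta_{ij}$. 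Applying the even-part identity to $\widetilde a$ gives $\sum_{k=1}^{2}f_{ki}\,\widetilde a\,f_{jk} = z^{4}\beta_{ij}\,\mathbf 1\in B\cap F = Z$, hence $z^{4}\beta_{ij}\in Z$. Taking $N = 4$ we have $z^{4}\alpha_{ij},z^{4}\beta_{ij}\in Z$ for all $i,j$, so
\[
a\,z^{5} \;=\; \sum_{i,j}(z^{4}\alpha_{ij})\,f_{ij} \;+\; \sum_{i,j}(z^{4}\beta_{ij})\,g_{ij} \;\in\; \sum_{i,j}Zf_{ij} + \sum_{i,j}Zg_{ij},
\]
which is a finitely generated $Z$-module; since $z\in Z$ is not a zero divisor of $B$, multiplication by $z^{5}$ is an injective $Z$-module homomorphism from $B$ into it, which is the desired embedding.

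I expect the main obstacle to be confined to the odd part: there is no multiplication in $\overline B$ that both stays within $B_1$ and records the coordinates of an arbitrary odd element, so the crux is to notice that $[x]\cdot[y] = \frac{\mathrm{tr}(y)}{\mathrm{tr}(w)}x$ does this once $a_1$ is taken as the left factor, and then to clear the scalar $\mathrm{tr}(w)$, which a priori need not lie in $Z$. Everything else --- the even part via the Formanek-type manipulation, and the bookkeeping with powers of $z$ --- is routine and parallels the earlier lemmas.
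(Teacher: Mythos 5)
Your proof is correct, and it reaches the same conclusion by a noticeably lighter route than the paper. The paper never splits $a$ into homogeneous components: it works with the full element $a$ and exhibits, for each of the eight coordinates, a separate carefully parenthesized product of length $5$--$6$ in the $f_{ij}$, $g_{ij}$ (with the factor $z_\alpha+z_\delta$, which is exactly your $\tau=z\,\mathrm{tr}(w)$) that kills the unwanted part of $a$ and isolates that coordinate, ending with $\alpha_{ij}z^6,\beta_{ij}z^5\in Z$ and $az^6$ in a finitely generated $Z$-module. You instead use the grading of $B$ itself to write $a=a_0+a_1$ with both components in $B$, dispose of the even part by the plain associative Formanek identity inside $B_0\subseteq M_2(F)$ (so no parenthesization issues arise), and handle the odd part with the single observation that $[x]\cdot[y]=\frac{\mathrm{tr}(y)}{\mathrm{tr}(w)}x$ preserves the left factor, so one product $a_1\cdot g_{11}$, cleared by $\tau\in Z$ (your adjustment of $z$ so that $z\,\mathrm{tr}(w)\in Z$ is the explicit version of what the paper does implicitly with $z_\alpha,z_\delta$), transfers all odd coordinates into the even part at once; this even gives a slightly better exponent ($az^5$ versus the paper's $az^6$). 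Both arguments rest on the same unproved-but-standard facts ($B\cap F=Z$ for the even center, $F\mathbf 1$ central in $B_{4|4}(w)$), so you are at the same level of rigor. The only blemish is the parenthetical claim that $[x]\cdot y$ and $x\cdot[y]$ \emph{merely permute odd coordinates}: $[x]\cdot y=[x\overline{y}]$ is right multiplication by $\overline{y}$ and $x\cdot[y]$ is more involved, but since you never use these products the remark is harmless.
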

	
	\begin{proof}
		Denote $\overline{B} = Z^{-1}B$, $F = Z^{-1}Z$. The algebra $\overline{B}$ has a basis $e_{ij},[e_{ij}]$, where $i,j\in\{1,2\}$. There is $z\in Z$ such that $e_{ij} = \frac{f_{ij}}{z}$, $[e_{ij}]=\frac{g_{ij}}{z}$, $\alpha = \frac{z_{\alpha}}{z}$, $\beta = \frac{z_{\beta}}{z}$, $\gamma = \frac{z_{\gamma}}{z}$, $\delta = \frac{z_{\delta}}{z}$. Let $a=\sum\alpha_{ij}e_{ij} + \sum\beta_{ij}[e_{ij}] \in B$, $\alpha_{ij},\beta_{ij}\in F$. Let
		\[w = \begin{pmatrix} \alpha & \beta \\
			\gamma & \delta
		\end{pmatrix}.\]
		Then
		\begin{gather*}
			f_{21}((f_{11}((f_{11}(af_{11}))f_{11}))f_{12}) + ((f_{11}((f_{11}(af_{11}))f_{11}))f_{12})f_{21} = \alpha_{11}z^6,\\
			f_{21}((((f_{12}(af_{12}))f_{22})f_{21})f_{12}) + ((((f_{12}(af_{12}))f_{22})f_{21})f_{12})f_{21} = \alpha_{12}z^6,\\
			f_{12}((((f_{21}(af_{21}))f_{11})f_{12})f_{21}) + ((((f_{21}(af_{21}))f_{11})f_{12})f_{21})f_{12} = \alpha_{21}z^6,\\
			((f_{22}((f_{22}(af_{22}))f_{22}))f_{21})f_{12} + f_{12}((f_{22}((f_{22}(af_{22}))f_{22}))f_{21}) = \alpha_{22}z^6,\\
			(z_{\alpha} + z_{\delta})(((g_{22}(af_{22}))f_{12})f_{21} + f_{21}((g_{22}(af_{22}))f_{12})) = \beta_{11}z^5,\\
			(z_{\alpha} + z_{\delta})(((g_{11}(af_{21}))f_{12})f_{21} + f_{21}((g_{11}(af_{21}))f_{12})) = -\beta_{12}z^5,\\
			(z_{\alpha} + z_{\delta})(((g_{22}(af_{12}))f_{21})f_{12} + f_{12}((g_{22}(af_{12}))f_{21})) = -\beta_{21}z^5,\\
			(z_{\alpha} + z_{\delta})(((g_{11}(af_{11}))f_{12})f_{21} + f_{21}((g_{11}(af_{11}))f_{12})) = \beta_{22}z^5.
		\end{gather*}
		Thus, $\alpha_{ij} z^6, \beta_{ij} z^5 \in B\cap F = Z$, whence
		\[az^6 \in \sum Ze_{ij} + \sum Z[e_{ij}].\] Thus, the $Z$-module $Bz^6$ is embedded in a finitely generated $Z$-module. It remains to note that the $Z$-module $Bz^6$ is isomorphic to the $Z$-module~$B$.
	\end{proof}
	Thus, we have proved the following theorem.
	\begin{theorem}
		Let $B$ be a right alternative superalgebra that is a central order in one of the superalgebras $B_{4|4}(w), B_{n|n}, B_{2|2}(\nu)$ and $Z=Z(B)_0$. Then $B$ is embedded in a finitely generated $Z$-module.
	\end{theorem}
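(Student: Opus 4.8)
The statement is precisely the conjunction of the three preceding lemmas: by hypothesis $B$ is a central order in one of $B_{n|n}$, $B_{2|2}(\nu)$, or $B_{4|4}(w)$, and the corresponding lemma delivers the conclusion in that case, so the theorem follows at once from them. I describe the common localization scheme those lemmas rely on, since that is where the substance lies.

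The plan is this. Set $Z = Z(B)_0$, $F = Z^{-1}Z$, and form $\overline{B} = Z^{-1}B$; since $B$ is assumed to be a central order in the given model, $\overline{B}$ is isomorphic, as an $F$-superalgebra, to that model, and hence carries the explicit standard basis of Examples 3--5 --- the vectors $e_i,[e_i]$, respectively the matrix units $e_{ij},[e_{ij}]$ --- together with the explicit multiplication table recorded there. Choose one nonzero $z \in Z$ that simultaneously clears the denominators of all these basis vectors, so that $z e$ lies in $B$ for each basis vector $e$, and clears the denominators of all structure constants of the model (the parameter $\nu$, resp. the entries of the matrix $w$), so that these, multiplied by $z$, lie in $Z$. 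A general element $a \in B$ is then written $a = \sum_e \lambda_e e$ with $\lambda_e \in F$.

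The core step is to produce, for each coordinate $\lambda_e$, an explicit parenthesized product $\chi_e(a)$ built only from $a$ and the cleared elements $z e$ (and the cleared structure constants) --- hence lying in $B$ --- whose value is $\lambda_e z^{k_e}$ for a suitable exponent $k_e$. Since $\chi_e(a) \in B$ while $\lambda_e z^{k_e} \in F$, and $B \cap F = Z$, each scaled coordinate $\lambda_e z^{k_e}$ lies in $Z$. With $N = \max_e k_e$ one gets $a z^N \in \sum_e Z e$, so the $Z$-module $B z^N$ sits inside the finitely generated $Z$-module $\sum_e Z e$; as $z$ is not a zero divisor of $B$, multiplication by $z^N$ is an injective $Z$-linear map $B \to B z^N$, and the composite is the desired embedding.

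The main obstacle, and the only case-sensitive ingredient, is the construction of the extractors $\chi_e$. From the explicit multiplication tables one must design bracketed products that annihilate every coordinate but one and survive on the remaining one; because $B$ is only right-alternative there is no freedom to reassociate, so the exact parenthesization and the order of the factors are what does the work --- this is why the lemmas exhibit deeply nested words such as $g_{i-1}(g_{i-1}c_i)$ in the $B_{n|n}$ case and the long nested products in the $B_{4|4}(w)$ case. In the matrix-based example $B_{4|4}(w)$ one moreover exploits the matrix-unit arithmetic to form trace-type symmetrizations $xy + yx$ that collapse a product to a scalar, and must keep track of the entries of $w$; alternatively one could first apply the known description of central orders in $M_2(F)$ (Formanek; see also \cite{P2015}) to the even part of $B$ and then handle the odd part as a bimodule over it, but the direct argument is self-contained. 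Finally, since each $\chi_e$ is a fixed expression depending linearly on $a$, a single power $z^N$ clears all of $B$ simultaneously, which is exactly what the argument needs.
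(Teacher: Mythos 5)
Your proposal is correct and matches the paper exactly: the paper derives this theorem immediately from the three preceding lemmas (for $B_{n|n}$, $B_{2|2}(\nu)$ and $B_{4|4}(w)$), and your summary of their common scheme --- clearing denominators with a single $z\in Z$, building explicit nested products that extract each coordinate as $\lambda_e z^{k_e}\in B\cap F=Z$, and embedding $Bz^N\cong B$ into $\sum_e Ze$ --- is precisely how those lemmas are proved.
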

	
	Moreover, the following corollary holds
	\begin{corollary}
		Let $A$ be an associative algebra over a field of characteristic not 2, which is a central order in the algebra of second-order matrices, and $B=A+M$ be a right alternative superalgebra, which is a central order in a simple finite-dimensional superalgebra. Then $B$ is embedded in a finitely generated $Z$-module, where $Z=Z(A)$.
	\end{corollary}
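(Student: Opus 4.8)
The plan is to localize $B$, use the structure theory of simple right-alternative superalgebras to identify the simple superalgebra in which $B$ is a central order, and then transfer the conclusion of Theorem~2 from the ring $Z(B)_0$ to the ring $Z=Z(A)$. So first I would set $\mathcal{S}=Z(B)_0^{-1}B$: by hypothesis this is a simple finite-dimensional superalgebra, it is right-alternative since $B$ is, and its even part is $\mathcal{S}_0=Z(B)_0^{-1}A$. Every element of the even part of the center of $B$ commutes and associates with all of $A$, so $Z(B)_0\subseteq Z(A)$ and the localizations are compatible. Since $A$ is a central order in $M_2(F)$ it is a noncommutative prime associative algebra satisfying the identities of $2\times 2$ matrices and containing elements $f_{11},f_{22}$ with $f_{ii}^{2}=tf_{ii}$, $f_{11}f_{22}=f_{22}f_{11}=0$ for a suitable $0\neq t\in Z(A)$ (arising from matrix units $e_{ii}=f_{ii}/t$); hence $\mathcal{S}_0$ is a finite-dimensional prime, so simple, associative algebra of $\mathrm{PI}$-degree $2$, and the orthogonal ``approximate idempotents'' $f_{11},f_{22}$ exclude the division case, so $\mathcal{S}_0\cong M_2(L)$ for a field $L$. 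In particular $\mathcal{S}_0$ is noncommutative, so $\mathcal{S}$ can be neither $B_{n|n}$ (even part $F^{n}$) nor $B_{2|2}(\nu)$ (even part $F^{2}$).

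Now I would apply the classification of simple finite-dimensional right-alternative superalgebras. Since the even part of $\mathcal{S}$ is $M_2(L)$, either $\mathcal{S}$ is a nonalternative asymmetric double, whence by \cite{PS2} (this uses $\mathrm{char}\,F\neq 2$) $\mathcal{S}\cong B_{4|4}(w)$ for some $w$; or $\mathcal{S}$ is associative (for instance $M=0$ and $\mathcal{S}=M_2(L)$, or $\mathcal{S}=Q_2(L)$) or one of the remaining simple alternative superalgebras with even part $M_2(L)$, and for all of those the required embedding into a finitely generated module over the even part of the center is already available from \cite{Formanek}, \cite{P2015}, \cite{P2020}. So it remains to treat $\mathcal{S}\cong B_{4|4}(w)$.

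Assume $\mathcal{S}\cong B_{4|4}(w)$. The proof of Lemma~5 (which uses only the multiplication table of $B_{4|4}(w)$) produces $z\in Z(B)_0$ with $Bz^{6}\subseteq\sum Z(B)_0\,e_{ij}+\sum Z(B)_0\,[e_{ij}]$. In $B_{4|4}(w)$ the scalar matrices $L\cdot 1$ lie in the whole center $Z(\mathcal{S})$, and $Z(A)\subseteq Z(\mathcal{S}_0)=L\cdot 1$ since an element of $A$ centralizing $A$ centralizes its localization $M_2(L)$; hence $Z(A)\subseteq Z(\mathcal{S})$, so $\mathcal{S}$ — and with it its subalgebra $B$, because $Z(A)B\subseteq AB\subseteq B$ — is a $Z(A)$-module. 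Then $N:=\sum Z(A)\,e_{ij}+\sum Z(A)\,[e_{ij}]$ is a finitely generated $Z(A)$-submodule of $\mathcal{S}$, multiplication by $z^{6}$ (where $z^{6}\in Z(B)_0\subseteq Z(A)$ is not a zero divisor of $B$) is an injective homomorphism of $Z(A)$-modules $B\to B$, and its image $Bz^{6}$ lies in $N$. Therefore $B$ embeds into the finitely generated $Z$-module $N$.

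I expect the main obstacle to be the identification in the first two paragraphs: one must be sure that localizing the order $A$ at the possibly much smaller ring $Z(B)_0$ still yields a full matrix algebra $M_2(L)$ rather than a proper suborder or a quaternion division algebra, and that $\mathcal{S}$, viewed over $Z(\mathcal{S})_0$, is genuinely an asymmetric double in the sense required by \cite{PS2} (if necessary after a harmless scalar extension), so that its classification is applicable. Once $\mathcal{S}\cong B_{4|4}(w)$ is established the transfer from $Z(B)_0$-modules to $Z(A)$-modules via Lemma~5 is routine.
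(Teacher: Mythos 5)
Your proposal is correct and takes essentially the same route as the paper: localize $B$, use the Pchelintsev--Shashkov classification \cite{PS2} to see that the resulting simple superalgebra is either associative, the alternative superalgebra $S_{4|2}(\sigma)$, or $B_{4|4}(w)$, dispose of the first two cases by \cite{Formanek}, \cite{P2015}, \cite{P2020}, and of the last by the lemma on $B_{4|4}(w)$. The extra details you work out --- that the even part localizes to a full matrix algebra $M_2(L)$ rather than a division algebra, and the passage from the finitely generated $Z(B)_0$-module of that lemma to a finitely generated $Z(A)$-module via $Z(B)_0\subseteq Z(A)\subseteq L\cdot 1\subseteq Z(Z^{-1}B)$ --- are points the paper leaves implicit, so they only tighten the argument.
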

	
	\begin{proof}
		By the results of \cite{PS2}, the superalgebra $Z^{-1}B$ is either associative, or isomorphic to the Shestakov alternative superalgebra $S_{4|2}(\sigma)$, or isomorphic to the superalgebra $B_{4|4}(w)$. The first two cases were discussed in \cite{P2020}. The third case is discussed in Lemma~3.
	\end{proof}
	
	\bigskip
	
	\end{document}